\documentclass[10pt,a4paper]{amsart}
\usepackage{amsmath,amsthm,amssymb,latexsym}
\usepackage{enumerate}
\usepackage{graphicx}

\newtheorem{theorem}{Theorem}
\newtheorem{lemma}[theorem]{Lemma}

\newtheorem{exa}[theorem]{Example}
\newtheorem{rema}[theorem]{Remark}

\newtheorem{defi}[theorem]{Definition}

\newtheorem{pro}[theorem]{Open Problem}

\numberwithin{equation}{section} \numberwithin{theorem}{section}

\newcommand{\C}{\mathbf{C^{\ast}}}


\title{When are dual Cayley automaton semigroups finite?}
\author{Victor Maltcev}

\date{}

\begin{document}

\maketitle

\begin{center}
Mathematical Institute, University of St Andrews\\
St Andrews, Fife KY16 9SS, Scotland,
\texttt{victor.maltcev@gmail.com}
\end{center}

\begin{abstract}
In this note we prove that, for a finite semigroup $S$, the dual
Cayley automaton semigroup $\mathbf{C^{\ast}}(S)$ is finite if and
only if $S$ is $\mathcal{H}$-trivial and has no non-trivial right
zero subsemigroups.
\end{abstract}

Keywords: automaton semigroup, Cayley automaton.


\section{Introduction \& Main Theorem}

In a recent paper~\cite{SS}, Silva \& Steinberg for any finite
semigroup $S$ define the \emph{Cayley automaton $\mathcal{C}(S)$}
associated with $S$: its state set is $S$; the alphabet, on which
the states act, is $S$; and when $\mathcal{C}(S)$ is in state $s$
and reads symbol $x$, it moves to the state $sx$ and outputs the
symbol $sx$. In other words, this is the automaton obtained from the
Cayley graph of $S$, specifying the output symbol on the arc, going
from $s$ and labeled by $x$, to be $sx$.

As to every automaton, to the automaton $\mathcal{C}(S)$ we can
associate its automaton semigroup $\mathbf{C}(S)$, i.e. the
transformation semigroup on infinite sequences $S^{\infty}$,
generated by states $s\in S$ viewed as the correspondent
transformations of $S^{\infty}$. In~\cite{SS} Silva \& Steinberg
prove the following

\begin{theorem}\label{th:Cayley-groups}
Let $G$ be a finite non-trivial group. Then $\mathbf{C}(G)$ is a
free semigroup of rank $|G|$.
\end{theorem}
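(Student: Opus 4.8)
The plan is to show that the canonical homomorphism $\phi\colon G^{+}\to\mathbf{C}(G)$ from the free semigroup on the alphabet $G$, sending each letter $s$ to the state $s$ regarded as a transformation of $G^{\infty}$, is injective; since it maps the $|G|$ free generators onto a generating set of $\mathbf{C}(G)$, this yields that $\mathbf{C}(G)$ is free of rank $|G|$. The starting point is the observation that a state $s$ sends $(x_{1},x_{2},x_{3},\dots)$ to $(sx_{1},\,sx_{1}x_{2},\,sx_{1}x_{2}x_{3},\dots)$, that is, $\phi(s)=L_{s}\circ P$, where $P$ is the bijection of $G^{\infty}$ sending a sequence to its sequence of partial products and $L_{s}$ multiplies every coordinate on the left by $s$. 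I would first record two elementary facts: $P^{-1}$ is the ``difference'' map $(y_{k})_{k}\mapsto(y_{k-1}^{-1}y_{k})_{k}$ (with $y_{0}=e$), and $P^{-1}L_{h}P=L_{h}^{(1)}$, where $L_{h}^{(1)}$ left-multiplies only the first coordinate by $h$. I would also record how $\phi$ behaves under sections: for $w=\hat s_{1}\cdots\hat s_{m}\in G^{+}$ the transformation $\phi(w)$ sends a sequence with first letter $x$ to one with first letter $\overline{w}x$ (where $\overline{w}=s_{1}\cdots s_{m}$ is the image of $w$ in $G$) followed by $\phi(w|_{x})$ applied to the tail, where $w|_{x}$ is again a word of length $m$, with $i$-th letter $s_{i}s_{i+1}\cdots s_{m}x$; in particular $w\mapsto w|_{x}$ is injective on words of each fixed length, being the composition of ``pass to suffix products'' with a right translation.

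The core is then a minimal-counterexample argument for injectivity of $\phi$. Suppose $\phi$ is not injective and choose $u\neq v$ with $\phi(u)=\phi(v)$ and $|u|+|v|$ least. If $u$ and $v$ begin with the same letter $\hat c$, then cancelling the bijection $\phi(\hat c)=L_{c}P$ on the left reduces $\phi(u)=\phi(v)$ to a strictly shorter relation $\phi(u_{0})=\phi(v_{0})$; by minimality (or, if one of $u_0,v_0$ is empty, by the lemma below) this forces $u_{0}=v_{0}$, hence $u=v$, a contradiction. So $u=\hat a u_{0}$ and $v=\hat b v_{0}$ with $a\neq b$. Now $L_{a}P\phi(u_{0})=L_{b}P\phi(v_{0})$ gives $\phi(u_{0})=P^{-1}L_{a^{-1}b}P\,\phi(v_{0})=L_{h}^{(1)}\phi(v_{0})$ with $h=a^{-1}b\neq e$. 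Since $L_{h}^{(1)}$ changes only the first coordinate, comparing the two sides on all sequences beginning with a fixed letter $x$ yields $\phi(u_{0}|_{x})=\phi(v_{0}|_{x})$ for every $x\in G$ — again a relation, of strictly smaller total length, so by minimality $u_{0}|_{x}=v_{0}|_{x}$; hence $u_{0}$ and $v_{0}$ have equal length and, by injectivity of sectioning, $u_{0}=v_{0}$. But comparing first coordinates in $\phi(u)=\phi(v)$ then gives $a\overline{u_{0}}=b\overline{u_{0}}$, i.e.\ $a=b$ — contradiction. (When $u_{0}$ or $v_{0}$ is empty one argues the same way: $\phi$ of the nonempty word equals $L_{h}^{(1)}$ for some $h\neq e$, whose sections are trivial, so again the lemma below applies; and the case $|u|=|v|=1$ is immediate since $\phi(\hat a)$ and $\phi(\hat b)$ differ already on the constant sequence $(e,e,\dots)$.)

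The one substantial ingredient — and the place where $G$ being nontrivial is used, and where I expect the real work to lie — is the lemma that $\phi(z)\neq\mathrm{id}$ for every $z\in G^{+}$. The cleanest route I see is to iterate the identity $P^{-1}\circ\phi(\hat s)=L_{s}^{(1)}$ (a special case of the above): if $z=\hat t_{1}\cdots\hat t_{k}$ with $\phi(z)=\mathrm{id}$, then successively applying $P^{-1}$ and stripping leading letters expresses $\phi(\hat t_{k})$ as an alternating composition $L_{t_{k-1}^{-1}}^{(1)}P^{-1}L_{t_{k-2}^{-1}}^{(1)}P^{-1}\cdots L_{t_{1}^{-1}}^{(1)}P^{-1}$ containing only $k-1$ copies of $P^{-1}$. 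Each factor $P^{-1}$ enlarges by at most one the set of output coordinates that can depend on the first input coordinate, and the $L^{(1)}$ factors do not enlarge it at all, so in this composition the $(k+1)$st output coordinate cannot depend on the first input coordinate; yet $\phi(\hat t_{k})$ sends $(x_{1},x_{2},\dots)$ to $(t_{k}x_{1},\,t_{k}x_{1}x_{2},\dots)$, whose $(k+1)$st entry $t_{k}x_{1}\cdots x_{k+1}$ plainly does depend on $x_{1}$ — a contradiction. (For $k=1$ it is immediate: $\phi(\hat t_{1})=\mathrm{id}$ would force $t_{1}x_{1}x_{2}=x_{2}$ for all $x_{1}$, impossible since $|G|\ge 2$.) Granting this lemma, the counterexample argument closes and $\mathbf{C}(G)\cong G^{+}$ is free of rank $|G|$.
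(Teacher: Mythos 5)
This statement is not proved in the paper at all: Theorem~\ref{th:Cayley-groups} is imported verbatim from Silva \& Steinberg~\cite{SS}, so there is no internal proof to compare yours against. On its own merits your argument is correct and self-contained. The factorisation $\phi(\hat s)=L_sP$ with $P$ the partial-product bijection, the identity $P^{-1}L_hP=L_h^{(1)}$, and the explicit section formula are all right (note that your product convention composes states right-to-left, the opposite of the automaton-semigroup convention the paper uses in~\eqref{eq:haha}; this makes $\phi$ an anti-homomorphism of $G^{+}$ onto $\mathbf{C}(G)$, which is harmless for freeness but worth flagging). The minimal-counterexample reduction is sound: same first letter cancels the bijection $L_cP$; different first letters give $\phi(u_0)=L_{a^{-1}b}^{(1)}\phi(v_0)$, and since $L_h^{(1)}$ only disturbs the first coordinate, passing to sections drops the total length and injectivity of $w\mapsto w|_x$ on words of fixed length (recoverable via $s_i=(s_i\cdots s_mx)(s_{i+1}\cdots s_mx)^{-1}$) closes the case. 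The key lemma $\phi(z)\neq\mathrm{id}$ is the genuinely new content, and your dependency-propagation argument works: rewriting $L_{t_k}P$ as a product with only $k-1$ factors of $P^{-1}$, each of which lets dependence on the first input coordinate spread to at most one further output coordinate, while the $(k+1)$st coordinate of $L_{t_k}P$ visibly depends on $x_1$ because $|G|\geq 2$. This is where nontriviality of $G$ enters, correctly. The only cosmetic gap is that ``depends on'' should be made precise (two inputs differing only in coordinate $1$ produce outputs agreeing beyond coordinate $m$), but the induction you sketch does exactly that. In spirit your freeness certificate is a quantitative refinement of the paper's own (much easier) Lemma~\ref{lm:again-right}, where a common prefix of two orbits determines the word; here the determination is recursive rather than immediate.
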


The author and Alan Cain after reading~\cite{SS} decided to study
semigroups $\mathbf{C}(S)$ in more details. In~\cite{CM} we
characterized when $\mathbf{C}(S)$ is free, commutative, trivial, a
left or a right zero semigroup. But the most interesting result we
obtain in~\cite{CM} is:

\begin{theorem}\label{th:ours}
Let $S$ be a finite semigroup. Then $\mathbf{C}(S)$ is finite if and
only if $S$ is $\mathcal{H}$-trivial.
\end{theorem}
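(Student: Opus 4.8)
The two directions require completely different arguments. For the ``only if'' direction, the point is that the Cayley automaton is inherited by subsemigroups: if $G\leq S$ then $G^{\infty}$ is invariant under every $\phi_{g}$ with $g\in G$, since $\phi_{g}(x)_{n}=gx_{1}\cdots x_{n}\in G$ whenever $g$ and the $x_{i}$ lie in $G$, and the restriction of $\phi_{g}$ to $G^{\infty}$ is exactly the generator of $\mathbf{C}(G)$ indexed by $g$ because the transitions $g\mapsto gx$ stay inside $G$. Hence restriction to $G^{\infty}$ is a homomorphism from $\langle\phi_{g}:g\in G\rangle\leq\mathbf{C}(S)$ onto $\mathbf{C}(G)$. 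A finite semigroup fails to be $\mathcal{H}$-trivial exactly when it contains a nontrivial subgroup $G$; for such a $G$, Theorem~\ref{th:Cayley-groups} gives that $\mathbf{C}(G)$ is free of rank $|G|\geq 2$, hence infinite, so $\mathbf{C}(S)$ is infinite. This is the contrapositive of ``$\mathbf{C}(S)$ finite $\Rightarrow S$ is $\mathcal{H}$-trivial''.

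For the ``if'' direction, assume $S$ is $\mathcal{H}$-trivial; I must bound the number of transformations $\phi_{w}$, $w\in S^{+}$. The first step is an explicit description. For $w=s_{1}\cdots s_{k}$ put $p_{j}=s_{j}s_{j-1}\cdots s_{1}$; peeling off the last letter and using $\phi_{ws}=\phi_{s}\circ\phi_{w}$, an induction on $k$ expresses every coordinate $\phi_{w}(x)_{n}$ as an iterated product of the $p_{j}$'s with the input letters --- e.g. $\phi_{w}(x)_{1}=p_{k}x_{1}$ and $\phi_{w}(x)_{2}=(p_{k}x_{1})(p_{k-1}x_{1})\cdots(p_{1}x_{1})\,x_{2}$ --- so $\phi_{w}$ is completely determined by the tuple $(p_{1},\dots,p_{k})$. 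Dually, computing sections in the tree $S^{*}$ yields the self-similar identity $(\phi_{w})_{a}=\phi_{(p_{1}a)(p_{2}a)\cdots(p_{k}a)}$; in tuple terms, the section below $a$ has associated tuple $\Theta_{a}(p_{1},\dots,p_{k})=\bigl((p_{j}a)(p_{j-1}a)\cdots(p_{1}a)\bigr)_{j}$, while $\phi_{w}$ acts on the first letter as the left translation $\lambda_{p_{k}}$. Thus $\mathbf{C}(S)$ is the set of tuples $(p_{1},\dots,p_{k})$ with $p_{j+1}\in Sp_{j}$, taken modulo the bisimulation ``$\lambda_{p_{k}}$ agree and all $\Theta_{a}$-images bisimilar''; finiteness of $\mathbf{C}(S)$ means only finitely many bisimulation classes occur.

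The heart of the matter is to prove this collapse when $S$ is $\mathcal{H}$-trivial. Along the sequence $p_{1},p_{2},\dots$ the principal left ideals descend, $Sp_{1}\supseteq Sp_{2}\supseteq\cdots$, and so stabilise within $|S|$ steps, after which all $p_{j}$ lie in a single $\mathcal{L}$-class; on that class $\mathcal{H}$-triviality --- equivalently, triviality of the Schützenberger groups, or the identity $t^{|S|}=t^{|S|+1}$ for all $t\in S$ --- strongly restricts how the tail of $w$ can act. I would convert this into a constant $K=K(S)$ such that every $\phi_{w}$ equals $\phi_{w'}$ for some $w'$ with $|w'|\leq K$, by a pumping argument on the tuple description: in a sufficiently long $w$ one isolates a block whose insertion is absorbed by an idempotent power $t^{\omega}$, and deleting it leaves $\lambda_{p_{k}}$ and --- after iterating the $\Theta_{a}$'s --- every deeper output unchanged; then $|\mathbf{C}(S)|\leq\sum_{j\leq K}|S|^{j}<\infty$. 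The genuine obstacle, and the place where aperiodicity is really used, is making this absorption work at all depths $n$ simultaneously: at depth $1$ one only has to preserve $\lambda_{p_{k}}$, which is immediate, but at larger depths the input letters are interleaved with the (left-ideal-stable but still moving) tail of $(p_{j})$, and one must show that aperiodicity makes those interleaved products insensitive to the deleted block. The two-element left-zero semigroup --- where $(p_{j})$ never stabilises, yet every $\phi_{w}$ is a constant map and $|\mathbf{C}(S)|=2$ --- is a good reminder that it is the bisimulation, not the raw sequence $(p_{j})$, that one must control.
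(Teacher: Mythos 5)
Your necessity argument is complete and correct: $G^{\infty}$ is indeed invariant under the states indexed by a subgroup $G$, restriction gives a surjection from $\langle\phi_g:g\in G\rangle$ onto $\mathbf{C}(G)$, and Theorem~\ref{th:Cayley-groups} finishes it. (You should at least cite the standard fact that a finite semigroup which is not $\mathcal{H}$-trivial contains a non-trivial subgroup --- this is the equivalence of $\mathcal{H}$-triviality with aperiodicity for finite semigroups; the non-obvious direction goes through Schützenberger groups or the cyclic-subsemigroup kernel.) Note that the paper itself does not prove Theorem~\ref{th:ours}: it quotes it from \cite{CM} and \cite{M} and only describes the strategy, namely to ``find an ultimate constant $N$ such that any product of states of length $N$ can be reduced to a shorter word.'' Your plan for sufficiency is exactly that strategy.

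The problem is that for sufficiency you have only a plan, not a proof. Your tuple encoding $(p_1,\dots,p_k)$ with $p_j=s_j\cdots s_1$, the formula for $\phi_w(x)_n$, the section rule $\Theta_a$, and the observation that $S^1p_1\supseteq S^1p_2\supseteq\cdots$ stabilises into a single $\mathcal{L}$-class are all correct, and they are the right bookkeeping. But the actual content of the theorem is the step you defer: exhibiting, in every sufficiently long word, a deletable block whose removal changes none of the outputs at any depth --- equivalently, changes neither $\lambda_{p_k}$ nor any iterated $\Theta$-image up to bisimulation. You yourself name this as ``the genuine obstacle'' and the place ``where aperiodicity is really used,'' and then do not carry it out; the phrase ``one isolates a block whose insertion is absorbed by an idempotent power'' is a hope, not an argument. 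This is precisely the part that the paper calls ``the harder bit'' and that occupies the cited proofs in \cite{CM} and \cite{M} (each by a different mechanism: actions on sequences, wreath recursions, and Mintz's argument). Your left-zero example even shows why the naive version of the plan cannot work --- the sequence $(p_j)$ need not stabilise, so the collapse must be argued at the level of the induced transformations, and that argument is entirely missing. As it stands, the sufficiency direction is a correct reduction of the problem to its hard core, followed by an assertion that the hard core can be done.
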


The sufficiency of Theorem~\ref{th:ours} was proved in a recent
work~\cite{M}. While the author and Alan Cain where
preparing~\cite{CM}, we did not know about the work~\cite{M}. The
sufficiency of Theorem~\ref{th:ours} is the harder bit of this
result. We provided in~\cite{CM} two different proofs of sufficiency
of Theorem~\ref{th:ours}: using the method of actions on sequences
and using the method of wreath recursions. These two approaches and
the method from~\cite{M} are all different in their realizations but
bear the same spirit: the aim was to find an ultimate constant $N$
such that any product of states from $\mathcal{C}(S)$ of length $N$
can be reduced to a shorter word.

Now, if in the Cayley graph of a finite semigroup $S$, instead of
the output symbol $sx$ we put the symbol $xs$, we arrive at the
\emph{dual Cayley automaton $\mathcal{C}^{\ast}(S)$}: the state and
alphabet sets being $S$ and when $\mathcal{C}^{\ast}(S)$ is in state
$s$ and reads symbol $x$, it moves to the state $sx$ and outputs the
symbol $xs$.

Denote by $\mathbf{C^{\ast}}(S)$ the automaton semigroup generated
by the automaton $\mathcal{C}^{\ast}(S)$. The main result of the
note is

\begin{theorem}\label{th:dual-cute}
Let $S$ be a finite semigroup. Then $\mathbf{C^{\ast}}(S)$ is finite
if and only if $S$ is $\mathcal{H}$-trivial and does not contain
non-trivial right zero subsemigroups.
\end{theorem}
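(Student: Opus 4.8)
The plan is to work throughout with the explicit action of $\mathcal{C}^{\ast}(S)$ on $S^{\infty}$. Feeding $(x_1,x_2,x_3,\dots)$ to the state $s$ produces first output $x_1s$ and passes to state $sx_1$, so
\[
s\cdot(x_1,x_2,x_3,\dots)=\bigl(x_1 s,\; x_2\,s x_1,\; x_3\,s x_1 x_2,\;\dots\bigr),
\]
with $n$-th coordinate $x_n\cdot(s x_1 x_2\cdots x_{n-1})$. More generally a word of states $w=s_1 s_2\cdots s_k$ reads a letter $x$ by outputting $x s_1 s_2\cdots s_k$ and passing to $w|_x=(s_1 x)(s_2 x s_1)(s_3 x s_1 s_2)\cdots(s_k x s_1\cdots s_{k-1})$, and then $w$ sends $(x_1,x_2,\dots)$ to the sequence whose $n$-th entry is $x_n\cdot\mathrm{prod}(w|_{x_1\cdots x_{n-1}})$, where $\mathrm{prod}$ denotes the product in $S$ of the letters of a state-word. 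Thus $\C(S)$ is the transformation semigroup of these maps, each governed by the function $S^{\ast}\to S$, $v\mapsto\mathrm{prod}(w|_v)$; to prove $\C(S)$ finite it suffices, in the spirit of \cite{CM,M}, to produce a constant $N=N(S)$ beyond which every state-word induces the same transformation as some shorter one.

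For necessity I would, in each forbidden situation, exhibit an invariant sub-cube of $S^{\infty}$ on which the generators already behave badly. If $S$ is not $\mathcal{H}$-trivial it has a non-trivial group $\mathcal{H}$-class, which by Cauchy's theorem contains an element $a$ of prime order $p$; the cyclic subgroup $\langle a\rangle\cong\mathbb{Z}/p$ is a subsemigroup of $S$, the set $\langle a\rangle^{\infty}$ is invariant under the states in $\langle a\rangle$, and these act on it exactly as the states of $\mathcal{C}^{\ast}(\mathbb{Z}/p)$, so $\C(\mathbb{Z}/p)$ is a homomorphic image of a subsemigroup of $\C(S)$. But in $\mathcal{C}^{\ast}(\mathbb{Z}/p)$ the identity state acts by $x\mapsto(x_1,\,x_1+x_2,\,x_1+x_2+x_3,\dots)$, i.e.\ as the partial-summation operator $P$; since $P$ induces on each finite quotient $(\mathbb{Z}/p)^{d}$ (the first $d$ coordinates) a unipotent linear map whose order is unbounded in $d$, $P$ has infinite order, and hence so does $\C(S)$. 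If instead $S$ contains a right zero subsemigroup $R$ with $|R|\ge2$, then $R^{\infty}$ is invariant under the states in $R$, and using $xr=r$ and $rx=x$ for $r,x\in R$ one checks that the state $r$ acts on $R^{\infty}$ by simply prepending the letter $r$; so after restriction the states in $R$ generate the (infinite, since $|R|\ge2$) free semigroup of ``prepend a nonempty $R$-word'' maps, whence $\C(S)$ is infinite.

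For sufficiency assume $S$ is $\mathcal{H}$-trivial with no non-trivial right zero subsemigroup; I may take $S$ to be a monoid, since adjoining an identity preserves both hypotheses and $\C(S)$ is a quotient of a subsemigroup of $\C(S^{1})$. Aperiodicity now supplies an exponent $r$ with $a^{r}=a^{r+1}$ for all $a\in S$, and the hypothesis on right zero subsemigroups is equivalent to saying every $\mathcal{R}$-class of $S$ carries at most one idempotent (if $e,f$ are $\mathcal{R}$-equivalent idempotents then $ef=f$, $fe=e$, and conversely the elements of a right zero subsemigroup are idempotents in one $\mathcal{R}$-class). The heart of the proof is to control the evolution $w^{(n)}=w|_{x_1\cdots x_{n-1}}$ of the state-word as an arbitrary input is read, and to show that after a number of steps bounded in terms of $S$ the products $\mathrm{prod}(w^{(n)})$ — and ultimately the whole action of $w^{(n)}$ — no longer depend on the letters of $w$ past position $N$; this gives the constant $N$, and finiteness follows since then every transformation in $\C(S)$ is induced by one of the at most $|S|^{N}$ state-words of length $<N$. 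The right zero hypothesis is used exactly here: without it the prepend-and-shift mechanism of the necessity proof lets $w^{(n)}$ keep shifting and so remember arbitrarily deep letters of $w$ forever, while with it — together with aperiodicity, which makes long products stabilize in the $\le_{\mathcal{R}}$ and $\le_{\mathcal{L}}$ orders and forces idempotents in a common $\mathcal{R}$-class to coincide — the contribution of the tail of $w$ is eventually overwritten. I expect the main obstacle to be precisely this quantitative analysis of the transition $w\mapsto w|_x$ on the products and letters of the state-word, where non-commutativity of $S$ defeats any naive telescoping and one must appeal to the structure theory of finite $\mathcal{H}$-trivial semigroups.
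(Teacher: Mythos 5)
Your necessity argument is essentially complete and correct, and in one place more elementary than the paper's: for the group obstruction you restrict to a cyclic subgroup $\mathbb{Z}/p$ (which exists because a finite non-$\mathcal{H}$-trivial semigroup contains a non-trivial subgroup --- note that a non-trivial $\mathcal{H}$-class need not itself be a group, so you do need this standard fact, e.g.\ via the kernel of the cyclic subsemigroup generated by a non-aperiodic element; the paper instead passes to the Sch\"{u}tzenberger group of the $\mathcal{H}$-class) and then observe that the identity state acts as the partial-summation operator, whose order on the first $d$ coordinates grows without bound. This replaces the appeal to the Silva--Steinberg freeness theorem. Your right-zero computation (states act by prepending a letter, hence generate a free semigroup) is exactly the paper's Lemma~\ref{lm:again-right}.

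The sufficiency direction, however, is a genuine gap: what you give is a programme, not a proof. You assert that the evolved state-words $w|_{x_1\cdots x_{n-1}}$ should eventually forget the tail of $w$, and you yourself flag that ``the main obstacle'' is precisely this quantitative analysis --- which is the entire content of the hard direction; no mechanism is supplied for why $\mathcal{H}$-triviality together with the one-idempotent-per-$\mathcal{R}$-class condition overwrites the tail. Moreover the bounded-word-length strategy you propose is the one the paper deliberately avoids here. The paper instead inducts on $|S|$: with $I$ the ideal complementing the maximal $\mathcal{D}$-classes, $\langle\overline{I}\rangle$ is finite by induction (the base case $I=\varnothing$ forces $S$ to be a left zero semigroup), $\overline{I}\langle\overline{S}\rangle^1$ is finite because such words transition into $\langle\overline{I}\rangle$, and the crux is Step~3: for $a_1,\ldots,a_k\in S\setminus I$ the hypotheses, via Lemma~\ref{lm:D-class} and Miller--Clifford, force $a_2,\ldots,a_k$ to be $\mathcal{L}$-equivalent idempotents with $a_ixa_1=a_i$, so $q(\overline{a_1}\cdots\overline{a_k},x)$ collapses to $\overline{a_1x}\cdot\overline{a_2}\cdots\overline{a_k}$ and a short case analysis bounds everything. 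As written, your forward implication is unproven and would need an argument of comparable substance.
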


We prove Theorem~\ref{th:dual-cute} in Section~\ref{sec:finiteness}.
The way we do it differs from the way of proving
Theorem~\ref{th:ours}: there will be no need to deal with `long'
words and trying to reduce their lengths. The author hopes to
exhibit the power of the method of wreath recursions.

Before we start the proof, in the next section we give all the
auxiliary notation and lemmas we will need.


\section{Auxiliary Lemmas}

In order to distinguish the states and symbols in
$\mathcal{C}^{\ast}(S)$, we will write $\overline{s}$ to denote the
state correspondent to $s\in S$. If $S=\{s_1,\ldots,s_n\}$, it is
convenient to realize $\overline{s}$ via the \emph{wreath recursion}
(see~\cite{N}):
\begin{equation*}
\overline{s}=\rho_s(\overline{ss_1},\ldots,\overline{ss_n}),
\end{equation*}
where $\rho_s:S\to S$, given by $x\mapsto xs$, corresponds to the
action of $\overline{s}$ on the symbols $S$, and $\overline{ss_i}$
is the state where $\overline{s}$ moves after reading the symbol
$s_i$.

If $x\in S$ and $\alpha\in\mathbf{C^{\ast}}(S)$ then by
$q(\alpha,x)$ we will denote the state, to which $\alpha$ moves
after reading $x\in S$. The transition function of $\alpha$ on $S$
we will denote by $\tau(\alpha)$. So, $\tau(\alpha):S\to S$ and for
all $x\in S$, $x\tau(\alpha)$ is the symbol which outputs
$\mathcal{C}^{\ast}(S)$, reading $x$ in the state $\alpha$. By
definition, for all $a_1,\ldots,a_k,x\in S$, we have
\begin{equation}\label{eq:haha}
q(\overline{a_1}\cdots\overline{a_k},x)=
\overline{a_1x}\cdot\overline{a_2xa_1}\cdots\overline{a_kxa_1\cdots
a_{k-1}}.
\end{equation}
Also the corresponding transition function
$\tau(\overline{a_1}\cdots\overline{a_k})$ is
$\rho_{a_1}\cdots\rho_{a_k}=\rho_{a_1\cdots a_k}$.

\begin{lemma}\label{lm:aux}
Let $S$ be a finite semigroup. Then for all $s,t\in S$,
$\overline{s}=\overline{t}$ in $\C(S)$ if and only if
$\rho_s=\rho_t$.
\end{lemma}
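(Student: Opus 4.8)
My plan is to handle the two implications separately: the forward one is immediate, and the reverse one reduces to a single computation that can be read straight off the wreath recursion, with no manipulation of long words. For the \emph{only if} direction, suppose $\overline{s}=\overline{t}$ in $\C(S)$. Since elements of $\C(S)$ are transformations of $S^{\infty}$, the two states act in the same way on the first coordinate of any sequence, so $\tau(\overline{s})=\tau(\overline{t})$; as recorded just before the lemma, $\tau(\overline{s})=\rho_s$ and $\tau(\overline{t})=\rho_t$, whence $\rho_s=\rho_t$.

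For the \emph{if} direction, assume $\rho_s=\rho_t$, i.e. $xs=xt$ for all $x\in S$. First I would observe that the hypothesis is inherited on the right: $\rho_{sx}=\rho_{tx}$ for every $x\in S$, because $y(sx)=(ys)x=(yt)x=y(tx)$ for all $y\in S$. Now read a finite word $x_1\cdots x_m\in S^{\ast}$ starting from state $\overline{s}$: iterating the transition rule (the $k=1$ case of~\eqref{eq:haha}), after the prefix $x_1\cdots x_{i-1}$ the current state is $\overline{sx_1\cdots x_{i-1}}$, whose transition function is $\rho_{sx_1\cdots x_{i-1}}$, so the $i$-th output letter equals $x_i\,\rho_{sx_1\cdots x_{i-1}}=x_i s x_1\cdots x_{i-1}$; likewise $\overline{t}$ outputs $x_i t x_1\cdots x_{i-1}$ in position $i$. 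Right-multiplying the identity $x_i s=x_i t$ by $x_1\cdots x_{i-1}$ shows these two symbols coincide, for every $i$ and every word, so $\overline{s}$ and $\overline{t}$ are the same transformation of $S^{\infty}$, i.e. $\overline{s}=\overline{t}$ in $\C(S)$. Equivalently, in wreath-recursion terms: $\overline{s}$ and $\overline{t}$ have the same root activity $\rho_s=\rho_t$, and, by the observation above, their corresponding sections $\overline{ss_i}$ and $\overline{ts_i}$ again satisfy the hypothesis, so equality follows by induction on the depth at which the output symbol is being read.

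I do not expect a genuine obstacle here: the only thing one has to get right is extracting from the transition rule both the intermediate states $\overline{sx_1\cdots x_{i-1}}$ and, through them, the precise shape $x_i s x_1\cdots x_{i-1}$ of the $i$-th output symbol; after that the letterwise identity $x_i s=x_i t$ finishes the argument. In particular, in contrast with the proofs behind Theorems~\ref{th:ours} and~\ref{th:dual-cute}, nothing has to be said about bounding the lengths of products of states.
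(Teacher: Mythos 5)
Your proof is correct and rests on the same key observation as the paper's, namely that $\rho_s=\rho_t$ implies $\rho_{sx}=\rho_{tx}$ for all $x\in S$; the paper phrases the argument as a recursion on the wreath decomposition (equality of states is equivalent to equality of root activities together with equality of all sections), while you unroll that recursion into the explicit formula $x_i s x_1\cdots x_{i-1}$ for the $i$-th output symbol — and you even note the wreath-recursion induction as an equivalent formulation. No gap.
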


\begin{proof}
Let $s,t\in S$. Then $\overline{s}=\overline{t}$ if and only if
$\rho_s=\rho_t$ and $\overline{sx}=\overline{tx}$ for all $x\in S$.
Recursing the latter, we obtain that $\overline{s}=\overline{t}$ if
and only if $\rho_s=\rho_t$ and $\rho_{sx}=\rho_{tx}$ for all $x\in
S$. It remains to notice that if $\rho_s=\rho_t$, then
$\rho_{sx}=\rho_{tx}$ for all $x\in S$.
\end{proof}

In the following three lemmas we collect some information about
$\C(S)$ for specific types of $S$.

\begin{lemma}\label{lm:easy-to-death-or-even-easier}
Let $G$ be a non-trivial finite group. Then $\mathbf{C^{\ast}}(G)$
is infinite.
\end{lemma}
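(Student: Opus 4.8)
The plan is to prove the statement directly, by exhibiting a single element of $\C(G)$ of infinite order, namely the state $\overline{e}$, where $e$ is the identity of $G$. Its transition function is $\rho_e=\mathrm{id}$, so $\overline e$ fixes every input letter; nevertheless it is far from the identity transformation, since after reading a letter $y$ it moves to the state $\overline{y}$. Unwinding this through~\eqref{eq:haha} and the description of the transition functions, one checks that for every $y_1y_2y_3\cdots\in G^{\infty}$ and every $k\ge1$,
\[
\bigl(\overline{e}\,(y_1y_2y_3\cdots)\bigr)_k \;=\; y_k\,y_1y_2\cdots y_{k-1},
\]
where $(\cdot)_k$ denotes the $k$-th letter and the empty product is $e$: the $k$-th output letter is the $k$-th input letter multiplied on the right by the product of all the preceding ones.

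Next I would iterate $\overline e$ on a convenient input. Fix $g\in G$ and apply the powers $\overline{e}^{\,m}$ to the sequence $x=g\,e\,e\,e\cdots$. Combining the displayed formula with the hockey-stick identity $\sum_{i=1}^{k}\binom{i+m-2}{m-1}=\binom{k+m-1}{m}$, a routine induction on $m$ gives
\[
\bigl(\overline{e}^{\,m}(x)\bigr)_k \;=\; g^{\binom{k+m-2}{m-1}}\qquad(m\ge1,\ k\ge1),
\]
the case $m=1$ merely recording that $\overline e(x)$ is the constant sequence $g\,g\,g\cdots$.

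Now I would specialise $g$. Since $G$ is non-trivial it contains an element $h$ of some prime order $p$, and then $h^{c}=e$ if and only if $p\mid c$. Hence, if $\overline{e}^{\,m}=\overline{e}^{\,m'}$ in $\C(G)$, the two transformations agree on $x=h\,e\,e\cdots$, so $\binom{k+m-2}{m-1}\equiv\binom{k+m'-2}{m'-1}\pmod p$ for every $k\ge1$. Writing $i=k-1$, $j=m-1$, $j'=m'-1$, this says the power series $\sum_{i\ge0}\binom{i+j}{j}x^{i}=(1-x)^{-(j+1)}$ and $(1-x)^{-(j'+1)}$ coincide in $\mathbf{F}_p[[x]]$, whence $(1-x)^{j+1}=(1-x)^{j'+1}$ and therefore $(1-x)^{|j-j'|}=1$; but $(1-x)^{d}\ne1$ in $\mathbf{F}_p[[x]]$ for every $d\ge1$ (inspect the lowest non-constant coefficient, using $(1-x)^{p}=1-x^{p}$ to reduce the case $p\mid d$), so $j=j'$. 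Thus $\overline{e},\overline{e}^{\,2},\overline{e}^{\,3},\ldots$ are pairwise distinct and $\C(G)$ is infinite.

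The manipulations with wreath recursions above are mechanical; the one step needing a genuine idea is the last one — that the functions $k\mapsto\binom{k+m-2}{m-1}$ stay pairwise distinct after reduction modulo $p$. Over $\mathbf{Z}$ these are polynomials in $k$ of pairwise distinct degrees, but that is not enough modulo $p$, so some input on binomial coefficients mod $p$ is unavoidable; the generating-function argument above (equivalently Kummer's theorem, which equates the $p$-adic valuation of $\binom{i+j}{j}$ with the number of carries when $i$ and $j$ are added in base $p$) is the cleanest way I know to close it. One could instead first reduce to $G=\mathbb{Z}/p\mathbb{Z}$ — by Lemma~\ref{lm:aux} the states $\overline g$ with $g\in\langle h\rangle$ generate a copy of $\mathbf{C^{\ast}}(\langle h\rangle)$ inside $\C(G)$ — but this detour buys nothing.
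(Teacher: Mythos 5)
Your proof is correct; I checked the formula $(\overline{e}(y_1y_2\cdots))_k=y_ky_1\cdots y_{k-1}$ against the wreath recursion, the hockey-stick induction, and the separation of the binomial coefficients modulo $p$ via $(1-x)^{-(j+1)}$, and all the steps go through. It is, however, a genuinely different argument from the paper's. The paper restricts the states $\overline{h}$, $h\in H=\langle a\rangle$ for a non-identity $a$, to $H^{\infty}$, observes that for commutative $H$ the dual Cayley automaton coincides with the ordinary one, and then quotes Theorem~\ref{th:Cayley-groups} (Silva--Steinberg): $\mathbf{C}(H)$ is free of rank $|H|$, hence infinite. That proof is three lines but outsources all the work to a nontrivial external theorem; yours is self-contained and elementary, exhibiting a single explicit element of infinite order --- amusingly, the identity state $\overline{e}$, whose transition function $\rho_e$ is trivial --- at the cost of a page of computation and a small amount of input on binomial coefficients mod $p$. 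You prove less (infiniteness rather than freeness of $\overline{H}$), but that is all the lemma needs, and your closing remark about first passing to $\mathbf{C^{\ast}}(\langle h\rangle)$ is exactly the paper's first move, so the two arguments diverge only in how they handle the cyclic case.
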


\begin{proof}
Take any non-identity element $a\in G$. Then $H=\langle a\rangle$ is
a non-trivial commutative group. Obviously the restrictions of the
action of the state $\overline{h}$, $h\in H$, to $H^{\infty}$, is
the same as the action of $\overline{h}$ in $\mathbf{C^{\ast}}(H)$.
Notice that $\mathcal{C}(H)=\mathcal{C}^{\ast}(H)$ and so, by
Theorem~\ref{th:Cayley-groups}, $\overline{H}$ is a free system.
Thus $\mathbf{C^{\ast}}(G)$ is infinite.
\end{proof}

\begin{lemma}\label{lm:something}
Let $L$ be a finite left zero semigroup and $S$ be a finite
semigroup. Then $\mathbf{C^{\ast}}(S\times L)=\mathbf{C^{\ast}}(S)$.
\end{lemma}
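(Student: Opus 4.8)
The plan is to show that the assignment $\overline{(s,\ell)}\mapsto\overline{s}$ induces an isomorphism $\mathbf{C^{\ast}}(S\times L)\to\mathbf{C^{\ast}}(S)$. As this map is visibly onto the generators of $\mathbf{C^{\ast}}(S)$, the whole content is the biconditional asserting that, for arbitrary products of generators,
\[
\overline{(a_1,\ell_1)}\cdots\overline{(a_k,\ell_k)}=\overline{(b_1,m_1)}\cdots\overline{(b_j,m_j)}
\]
in $\mathbf{C^{\ast}}(S\times L)$ if and only if
\[
\overline{a_1}\cdots\overline{a_k}=\overline{b_1}\cdots\overline{b_j}
\]
in $\mathbf{C^{\ast}}(S)$; this yields at once well-definedness, injectivity and, together with surjectivity, the isomorphism. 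First I would unwind the left-zero multiplication: since $m\ell=m$ for all $\ell,m\in L$, we get $(t,m)(s,\ell)=(ts,m)$ and $(s,\ell)(t,m)=(st,\ell)$ in $S\times L$, so $\rho_{(s,\ell)}$ acts by $(t,m)\mapsto(ts,m)$, trivially on the $L$-coordinate and as $\rho_s$ on the $S$-coordinate. Together with Lemma~\ref{lm:aux} this already settles the case $k=j=1$.

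For the general case I would feed these identities into~\eqref{eq:haha}. One computes $(a_i,\ell_i)(x,m)(a_1,\ell_1)\cdots(a_{i-1},\ell_{i-1})=(a_ixa_1\cdots a_{i-1},\ell_i)$, so reading a letter $(x,m)$ sends $\alpha=\overline{(a_1,\ell_1)}\cdots\overline{(a_k,\ell_k)}$ to $\overline{(a_1x,\ell_1)}\cdots\overline{(a_kxa_1\cdots a_{k-1},\ell_k)}$: the $L$-coordinates $\ell_1,\dots,\ell_k$ of the states are left untouched, while the $S$-coordinates evolve exactly as those of $\overline{a_1}\cdots\overline{a_k}$ do when reading $x$ in $\mathbf{C^{\ast}}(S)$. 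An induction on the length of $w$ then shows that, for any word $w$ over $S\times L$ whose $L$-coordinates, erased, give $u\in S^{\ast}$, the state $q(\alpha,w)$ reached after reading the whole word $w$ equals $\overline{(c_1,\ell_1)}\cdots\overline{(c_k,\ell_k)}$ with $\overline{c_1}\cdots\overline{c_k}=q(\overline{a_1}\cdots\overline{a_k},u)$ in $\mathbf{C^{\ast}}(S)$. Hence $\tau(q(\alpha,w))=\rho_{(c_1\cdots c_k,\ell_1)}$, a transformation of $S\times L$ that determines and is determined by $\rho_{c_1\cdots c_k}=\tau(q(\overline{a_1}\cdots\overline{a_k},u))$.

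Finally, $\alpha$ and $\beta$ are equal in the automaton semigroup $\mathbf{C^{\ast}}(S\times L)$ precisely when, for every finite input word $w$, the states $q(\alpha,w)$ and $q(\beta,w)$ induce the same output transformation of the alphabet. By the previous paragraph this is equivalent to $\tau(q(\overline{a_1}\cdots\overline{a_k},u))=\tau(q(\overline{b_1}\cdots\overline{b_j},u))$ for every $u\in S^{\ast}$ --- every such $u$ arising from some $w$ because $L\neq\emptyset$ --- that is, to $\overline{a_1}\cdots\overline{a_k}=\overline{b_1}\cdots\overline{b_j}$ in $\mathbf{C^{\ast}}(S)$, which is the biconditional sought. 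Nothing here is deep; the only point one must state with care is the standard characterisation of equality in an automaton semigroup just used, and the only genuine work is keeping straight the bookkeeping of the second paragraph --- namely that the $L$-coordinates of the states remain inert and that the $L$-coordinates of the input letters never influence the $S$-coordinates of the outputs. That bookkeeping, rather than any real difficulty, is the main obstacle.
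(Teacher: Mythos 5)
Your proposal is correct and follows essentially the same route as the paper: both establish that $\overline{(s,\ell)}\mapsto\overline{s}$ induces an isomorphism, with the $L$-coordinate rendered inert by the left-zero law. The paper invokes Lemma~\ref{lm:aux} to collapse the generating set and then declares the isomorphism ``easy to check''; you simply carry out that wreath-recursion bookkeeping in full.
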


\begin{proof}
Let $s\in S$ and $r,t\in L$. Then it follows from Lemma~\ref{lm:aux}
that $\overline{(s,r)}=\overline{(s,t)}$ in $\C(S\times L)$. Hence
$\C(S\times L)$ coincides with $T=\{\overline{(s,r_0)}:s\in S\}$ for
any fixed $r_0\in L$. It is now easy to check that
$\overline{(s,r_0)}\mapsto\overline{s}$ gives rise to an isomorphism
from $T$ onto $\C(S)$.
\end{proof}

\begin{lemma}\label{lm:again-right}
Let $R$ be a finite right zero semigroup with $|R|>1$. Then
$\mathbf{C^{\ast}}(R)$ is a free semigroup of rank $|R|$.
\end{lemma}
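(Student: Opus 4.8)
The plan is to identify $\C(R)$ with the free semigroup on the set $R$ by computing explicitly how the elements $\overline{a_1}\cdots\overline{a_k}$ act on $R^{\infty}$.

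The first step uses that in a right zero semigroup $xy=y$ for all $x,y$, so that $\rho_s\colon x\mapsto xs$ is the constant map with image $\{s\}$. In particular the generators $\overline{s}$, $s\in R$, are pairwise distinct by Lemma~\ref{lm:aux}. Feeding $xy=y$ into \eqref{eq:haha}, the iterated products simplify, $a_1x=x$ and $a_jxa_1\cdots a_{j-1}=a_{j-1}$ for $2\le j\le k$, so that
\[
q(\overline{a_1}\cdots\overline{a_k},x)=\overline{x}\cdot\overline{a_1}\cdots\overline{a_{k-1}},
\]
whereas $\tau(\overline{a_1}\cdots\overline{a_k})=\rho_{a_1\cdots a_k}=\rho_{a_k}$. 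Already the case $k=1$ shows that $\overline{s}$ acts on $R^{\infty}$ by $x_1x_2x_3\cdots\mapsto s\,x_1x_2x_3\cdots$, i.e.\ by prepending the letter $s$.

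The second step is a straightforward induction on $j$, using the two displayed identities: after reading $x_1\cdots x_j$ with $j\le k$, the product $\overline{a_1}\cdots\overline{a_k}$ has emitted $a_k,a_{k-1},\ldots,a_{k-j+1}$ and has moved to the state $\overline{x_j}\cdots\overline{x_1}\,\overline{a_1}\cdots\overline{a_{k-j}}$; feeding in further letters then shows that $\overline{a_1}\cdots\overline{a_k}$ acts on $R^{\infty}$ by $x_1x_2x_3\cdots\mapsto a_ka_{k-1}\cdots a_1\,x_1x_2x_3\cdots$, that is, by prepending the reverse of the word $a_1\cdots a_k$. (Equivalently, this transformation is just the composite of the prepend-maps of the generators $\overline{a_1},\ldots,\overline{a_k}$.)

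Finally, the map $a_1\cdots a_k\mapsto\overline{a_1}\cdots\overline{a_k}$ is then a surjective homomorphism from the free semigroup on $R$ onto $\C(R)$, and it is injective since the transformation $\overline{a_1}\cdots\overline{a_k}$ of $R^{\infty}$ determines, upon evaluation at any single sequence, the string $a_ka_{k-1}\cdots a_1$ and hence the word $a_1\cdots a_k$. Therefore $\C(R)$ is free of rank $|R|$. I do not anticipate any genuine difficulty here: the only points requiring care are performing the right zero collapses in \eqref{eq:haha} correctly and keeping track of the word reversal in the induction, after which freeness falls out immediately.
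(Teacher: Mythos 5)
Your computation of the action is correct and is essentially the paper's argument in a different formalism: in a right zero semigroup $\rho_{a_1\cdots a_k}=\rho_{a_k}$ is the constant map onto $\{a_k\}$ and $q(\overline{a_1}\cdots\overline{a_k},x)=\overline{x}\cdot\overline{a_1}\cdots\overline{a_{k-1}}$, so by your induction the product $\overline{a_1}\cdots\overline{a_k}$ acts on $R^{\infty}$ by prepending $a_k\cdots a_1$. The paper reads the same conclusion directly off the automaton; your wreath-recursion derivation is fine.

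The genuine gap is in the last step. You claim injectivity because the transformation ``determines, upon evaluation at any single sequence, the string $a_k\cdots a_1$.'' A single output sequence $a_k\cdots a_1x_1x_2\cdots$ does \emph{not} determine that string, since you cannot tell where the prepended block ends: for $R=\{1,2\}$, both $\overline{1}$ and $\overline{1}\cdot\overline{1}$ fix $1^{\infty}$, so one evaluation cannot separate them. A telling symptom is that your argument never uses the hypothesis $|R|>1$ and would therefore ``prove'' the lemma for $|R|=1$, where $\C(R)$ is trivial rather than free of rank $1$. The repair is exactly what the paper does: evaluate at two distinct constant sequences $1^{\infty}$ and $2^{\infty}$ (this is where $|R|>1$ is needed); the images are $a_k\cdots a_11^{\infty}$ and $a_k\cdots a_12^{\infty}$, whose longest common prefix is precisely $a_k\cdots a_1$, which recovers $k$ and hence the word $a_1\cdots a_k$.
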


\begin{proof}
Let $R=\{q_1,\ldots,q_n\}$ with $n>1$. A direct calculation shows
that $\mathcal{C}^{\ast}(R)$ is the automaton $\mathcal{A}$ of the
type:

\begin{figure}[htp]
\centering
\includegraphics{cm_1auto.3}
\end{figure}

\vspace{\baselineskip}

We will prove now that the automaton semigroup generated by the
automaton $\mathcal{A}$ is free of rank $n$. If $\mathcal{A}$ has
just read a symbol $i$, then it has just entered state $q_i$ and its
\emph{next} output symbol will be $i$. Thus the action of $q_i$ is
to send a sequence $\alpha$ to $i\alpha$: the sequence $\alpha$ is
shifted right by one symbol and the symbol $i$ is inserted at the
start. So if $w=q_{i_1}\cdots q_{i_m}$ with $i_j\in\{1,\ldots,n\}$,
then
\begin{equation*}
1^{\infty}\cdot w=i_mi_{m-1}\cdots
i_2i_11{^\infty}\quad\mbox{and}\quad 2^{\infty}\cdot
w=i_mi_{m-1}\cdots i_2i_12^{\infty}.
\end{equation*}
The common prefix of $1^{\infty}\cdot w$ and $2^{\infty}\cdot w$
thus determines $w$ and so the automaton semigroup generated by
$\mathcal{A}$ is free with the basis $R$.
\end{proof}

The following lemma is standard to prove, we include it for
completeness.

\begin{lemma}\label{lm:D-class}
Let $S$ be a finite semigroup and let $a,b\in S$. If all $a$, $b$
and $ab$ belong to the same $\mathcal{D}$-class of $S$, then $ab\in
R_{a}\cap L_{b}$.
\end{lemma}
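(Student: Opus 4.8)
The plan is to reduce the statement to the \emph{stability} of finite semigroups: in a finite semigroup one has $a\,\mathcal{J}\,ab\Rightarrow a\,\mathcal{R}\,ab$ and $b\,\mathcal{J}\,ab\Rightarrow b\,\mathcal{L}\,ab$. Since $\mathcal{D}\subseteq\mathcal{J}$ always, the hypothesis that $a$, $b$ and $ab$ lie in one common $\mathcal{D}$-class yields at once $a\,\mathcal{J}\,ab$ and $b\,\mathcal{J}\,ab$, so it is enough to establish these two implications. The easy halves $ab\leq_{\mathcal{R}}a$ (as $ab\in aS^{1}$) and $ab\leq_{\mathcal{L}}b$ (as $ab\in S^{1}b$) are immediate, so in each case only the reverse inequality requires work.

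For the $\mathcal{R}$-part I would start from $a\,\mathcal{J}\,ab$, write $a=u\,ab\,v$ with $u,v\in S^{1}$, and substitute this expression repeatedly for the distinguished occurrence of $a$ on the right-hand side; a routine induction gives $a=u^{n}\,a\,(bv)^{n}$ for every $n\geq 1$. Since $S$ is finite, some power $e=(bv)^{k}$ is idempotent, so $a=u^{k}ae$; multiplying on the right by $e$ then forces $ae=a$. Hence $a=a(bv)^{k}=(ab)\bigl(v(bv)^{k-1}\bigr)\in abS^{1}$, which gives $a\,\mathcal{R}\,ab$, i.e. $ab\in R_{a}$. The $\mathcal{L}$-part is entirely dual: from $b=u'\,ab\,v'$ one obtains $b=(u'a)^{n}\,b\,(v')^{n}$, chooses an idempotent power $f=(u'a)^{k'}$, multiplies on the left to get $fb=b$, and reads off $b=(u'a)^{k'-1}u'\,(ab)\in S^{1}ab$, so $ab\in L_{b}$. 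Combining the two conclusions gives $ab\in R_{a}\cap L_{b}$.

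There is no real obstacle here — this is the classical stability argument — and the only point that needs a little care is the passage through an idempotent: finiteness is used precisely to replace $bv$ (respectively $u'a$) by one of its idempotent powers, and one should keep the distinction between $S$ and $S^{1}$ in mind so that $bv$ and $u'a$ are genuinely seen to lie in $S$ (they do, since $a,b\in S$, whatever $u,v,u',v'\in S^{1}$ are), while the trailing/leading factors $v(bv)^{k-1}$ and $(u'a)^{k'-1}u'$ lie in $S^{1}$.
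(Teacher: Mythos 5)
Your proof is correct, but it takes a genuinely different route from the paper's. The paper reduces the claim to the Miller--Clifford theorem: it picks an element $c$ of the relevant $\mathcal{H}$-class, observes $c\,\mathcal{D}\,c^{2}$, and then passes through a faithful representation of $S$ in a finite full transformation monoid $\mathcal{T}_X$, where $\mathcal{D}$-equivalence of an element and its square forces $\mathcal{H}$-equivalence; this shows that $\mathcal{H}$-class is a group, hence contains an idempotent, and Miller--Clifford then yields $ab\in R_a\cap L_b$. You instead prove directly that finite semigroups are stable ($a\,\mathcal{J}\,ab\Rightarrow a\,\mathcal{R}\,ab$ and dually), via the iteration $a=u^{n}a(bv)^{n}$ and an idempotent power of $bv$; the lemma is then immediate since $\mathcal{D}\subseteq\mathcal{J}$. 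Your argument is more elementary and self-contained (no faithful representation, no Miller--Clifford), and it actually establishes the slightly stronger statement with $\mathcal{J}$ in place of $\mathcal{D}$ in the hypothesis. What the paper's route buys is the intermediate fact that the $\mathcal{H}$-class $L_a\cap R_b$ contains an idempotent, which is reused explicitly in Step 3 of the main proof; your version recovers that only by invoking Miller--Clifford afterwards. The one step worth double-checking in your write-up --- that $ae=a$ follows from $a=u^{k}ae$ with $e=e^{2}$ --- is fine: multiplying that identity on the right by $e$ gives $ae=u^{k}ae^{2}=u^{k}ae=a$.
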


\begin{proof}
By~\cite[Theorem~2.4]{CP} we have that $L_aR_b$ is contained in the
same $\mathcal{D}$-class of $S$. Take any element $c\in R_a\cap
L_b$. Then $c^2\mathcal{D}c$. Let $\mu:S\to\mathcal{T}_X$ be any
faithful representation of $S$ into $\mathcal{T}_X$, the
transformation semigroup on $X$, for some \emph{finite} $X$.
Obviously $H_c$ is a group $\mathcal{H}$-class in $S$ if and only if
$H_{\mu(c)}$ is a group $\mathcal{H}$-class in $\mathcal{T}_X$. We
have that $\mu(c)$ and $\mu(c)^2=\mu(c^2)$ are
$\mathcal{D}$-equivalent in $\mathcal{T}_X$. This happen only if
$\mu(c)\mathcal{H}\mu(c)^2$ in $\mathcal{T}_X$. So that $H_{\mu(c)}$
is a group and so $H_c$ is a group in $S$. Therefore
$c\mathcal{H}c^2$ and so $L_{a}\cap R_{b}$ has an idempotent. By
Clifford-Miller Theorem it now follows that $ab\in R_a\cap L_b$.
\end{proof}


\section{Proof of Theorem~\ref{th:dual-cute}}\label{sec:finiteness}

\begin{proof}[Proof of Theorem~\ref{th:dual-cute}]
First notice that the condition that $S$ does not contain
non-trivial right zero subsemigroups is equivalent to the condition
that there are no two distinct idempotents $e,f\in S$ such that
$e\mathcal{R}f$.

\vspace{\baselineskip}

$(\Rightarrow)$. Suppose that $\mathbf{C^{\ast}}(S)$ is finite.

\vspace{\baselineskip}

{\bf\noindent $S$ is $\mathcal{H}$-trivial.}

Take any $\mathcal{H}$-class $H$ in $S$. With the seek of a
contradiction, suppose that $|H|>1$. Let $T=\{t\in S:Ht\subseteq
H\}$. Then for every $t\in T$, by~\cite[Lemma~2.21]{CP}, the mapping
$\gamma_t:h\mapsto ht$, $h\in H$, is a bijection of $H$ onto itself.
The set of all these bijections forms the so-called
\emph{Sch\"{u}tzenberger group} $\Gamma(H)$ of $H$.
By~\cite[Theorem~2.22]{CP} we have $|\Gamma(H)|=|H|$. Take arbitrary
$t_1,\ldots,t_k\in T$. Then, by~\eqref{eq:haha}, for all $x\in H$ we
have:
\begin{equation*}
q(\overline{t_1}\cdots\overline{t_k},x)=
\overline{t_1x}\cdot\overline{t_2xt_1}\cdots\overline{t_kxt_1\cdots
t_{k-1}}.
\end{equation*}
We also have
$\tau(\overline{t_1}\cdots\overline{t_k})=\tau(\overline{t_1\cdots
t_k})$.

Take now
$\gamma_{t_1},\ldots,\gamma_{t_k},\gamma_x\in\C(\Gamma(H))$. Then
\begin{equation*}
q(\overline{\gamma_{t_1}}\cdots\overline{\gamma_{t_k}},\gamma_{x})=
\overline{\gamma_{t_1}\gamma_{x}}\cdot\overline{\gamma_{t_2}
\gamma_{x}\gamma_{t_1}}\cdots\overline{\gamma_{t_k}\gamma_{x}\gamma_{t_1}\cdots
\gamma_{t_{k-1}}}
\end{equation*}
and $\tau(\overline{\gamma_{t_1}}\cdots\overline{\gamma_{t_k}})=
\tau(\overline{\gamma_{t_1}\cdots\gamma_{t_k}})$. Notice also that
$\gamma_x\gamma_y=\gamma_{xy}$ for $x,y\in T$.

Take $t\in T$ and consider the restriction of $\overline{t}$ to
$H^{\infty}$. From the very definition of $\Gamma(H)$, it follows
that the mapping
$\overline{t}\upharpoonright_{H^{\infty}}\mapsto\overline{\gamma_t}$
gives rise to a well-defined homomorphism from
$\langle\overline{t}\upharpoonright_{H^{\infty}}:t\in T\rangle$ onto
$\mathbf{C^{\ast}}(\Gamma(H))$. By
Lemma~\ref{lm:easy-to-death-or-even-easier}, it means that
$\langle\overline{T}\rangle$ is infinite. Thus
$\mathbf{C^{\ast}}(S)$ is infinite, a contradiction.

\vspace{\baselineskip}

{\bf\noindent There are no distinct idempotents $e,f\in S$ such that
$e\mathcal{R}f$.}

Suppose the converse: that there exist two idempotents $e\neq f$
with $e\mathcal{R}f$. Then $\{e,f\}$ is a $2$-element right zero
semigroup. Restricting the action of $\overline{e}$ and
$\overline{f}$ to $\{e,f\}^{\infty}$ yields the automaton
$\mathcal{C}^{\ast}(\{e,f\})$. By Lemma~\ref{lm:again-right}, it
follows now that $\langle\overline{e},\overline{f}\rangle$ is a free
semigroup of rank $2$, a contradiction.

\vspace{\baselineskip}

$(\Leftarrow)$. We will prove by induction on $|S|$ that if $S$ is
$\mathcal{H}$-trivial and every its $\mathcal{R}$-class contains at
most one idempotent, then $\C(S)$ is finite. The base case $|S|=1$
is obvious. Assume that we have proved this for all such semigroups
of size $\leq n$. Take now any semigroup $S$ with the assumption of
sufficiency of the theorem such that $|S|=n+1$.

Let $M$ be the set of all maximal $\mathcal{D}$-classes in $S$, and
let $I$ be the complement of all these $\mathcal{D}$-classes in $S$.
Suppose first that $I=\varnothing$. Then $S$ consists of a single
$\mathcal{D}$-class and so $S$ is a Rees matrix semigroup
$\mathcal{M}[G;X,Y;P]$ for some group $G$ and a $Y\times X$-matrix
$P=(p_{yx})_{Y\times X}$. Since $S$ is $\mathcal{H}$-trivial, we
have that $G=\{1\}$. Moreover, by~\cite[Theorem~3.4.2]{H}, we may
assume that there exists a column in $P$ consisting entirely of the
element $1$. Since columns in $P$ correspond to
$\mathcal{R}$-classes in $S$, we obtain that $|X|=1$. So, again
by~\cite[Theorem~3.4.2]{H}, it now holds that $S=Y$ is a left zero
semigroup. Then by Lemma~\ref{lm:something}, $\C(S)$ is trivial.

So in the remainder of the proof we may assume that
$I\neq\varnothing$. Obviously $I$ is an ideal in $S$. We prove that
$\C(S)=\langle\overline{S}\rangle$ is finite in the following four
steps.

\vspace{\baselineskip}

{\bf\noindent Step 1: $\mathbf{\langle\overline{I}\rangle}$ is
finite.}

It suffices to prove that there are finitely many products
$\overline{i}\cdot\overline{i_1}\cdots\overline{i_k}\in\langle\overline{I}\rangle$
for any fixed $i\in I$. We have that
$\overline{i}\cdot\overline{i_1}\cdots\overline{i_k}$ and
$\overline{i}\cdot\overline{j_1}\cdots\overline{j_n}$ are distinct
if and only if the restrictions of
$\overline{i_1}\cdots\overline{i_k}$ and
$\overline{j_1}\cdots\overline{j_n}$ on $S^{\infty}\overline{i}$
coincide. Notice that $S^{\infty}\overline{i}\subseteq I^{\infty}$.
Obviously $\overline{i_1}\cdots\overline{i_k}$ and
$\overline{j_1}\cdots\overline{j_n}$ act on $I^{\infty}$ in the same
way as the correspondent products from $\C(I)$ do. Now the claim of
Step 1 follows from the induction hypothesis.

\vspace{\baselineskip}

{\bf\noindent Step 2:
$\mathbf{\overline{I}\langle\overline{S}\rangle^1=
\overline{I}\cup\overline{I}\langle\overline{S}\rangle}$
is finite.}

Take a typical element
$w=\overline{i}\cdot\overline{a_1}\cdots\overline{a_k}\in
\overline{I}\langle\overline{S}\rangle$. Then for all $x\in S$, we
have
\begin{equation*}
q(\overline{i}\cdot\overline{a_1}\cdots\overline{a_k},x)=
\overline{ix}\cdot\overline{a_1xi}\cdot\overline{a_2xia_1}\cdots
\overline{a_kxia_1\cdots a_{k-1}}.
\end{equation*}
Having that $I$ is an ideal in $S$, we deduce that
$q(\overline{i}\cdot\overline{a_1}\cdots\overline{a_k},x)
\in\langle\overline{I}\rangle$. Therefore, having that
$\tau(w)=\rho_{j}$ for $j=i\cdot a_1\cdots a_k\in I$, we obtain
$|\overline{I}\langle\overline{S}\rangle|\leq
|I|\cdot|\langle\overline{I}\rangle|^{|S|}$. Step 2 now follows
immediately.

\vspace{\baselineskip}

{\bf\noindent Step 3:
$\mathbf{\langle\overline{S}\setminus\overline{I}\rangle}$ is
finite.}

Take $a_1,\ldots,a_k\in S\setminus I$. For all $x\in S$, we
have
\begin{equation*}
q_x=q(\overline{a_1}\cdots\overline{a_k},x)=
\overline{a_1x}\cdot\overline{a_2xa_1}\cdots\overline{a_kxa_1\cdots
a_{k-1}}.
\end{equation*}
Obviously, to prove Step 3, it suffices to show that there are only
finitely many expressions $q_x$ for all $a_1,\ldots,a_k\in
S\setminus I$ and $x\in S$. Moreover, by Step 2, it even suffices to
show finiteness of the set $P_0$ of all such expressions $q_x$ with
additional requirement that $a_1x\in S\setminus I$ and $a_2xa_1\in
S\setminus I$. (Otherwise $a_2xa_1\in I$ and so $q_x$ comes from the
finite set
$\overline{S}\cdot\overline{I}\langle\overline{S}\rangle^1$.)

Observe that if $a_1,\ldots,a_i,x$ do not come from the same
$\mathcal{D}$-class from $M$, for some $i$, then $a_ixa_1\cdots
a_{k-1}\in I$. In particular, if $a_1x$ and $a_1$ are not elements
from the same $\mathcal{D}$-class in $M$, then $a_1x\in I$ and so
$q_x\in\overline{I}\langle\overline{S}\rangle^1$.

So, take $x\in S$ such that $a_1x\in S\setminus I$ and $a_2xa_1\in
S\setminus I$. Find the maximum number $m$ such that $a_ixa_1\cdots
a_{i-1}\in S\setminus I$ for all $i\leq m$. Then, by the preceding
paragraph, all $x,a_1,\ldots,a_m$ are from the same
$\mathcal{D}$-class in $M$. Now, $a_{m+1}xa_1\cdots a_m\in I$ and
so, by Step 2, to prove Step 3, it suffices to prove finiteness of
the set $P_1\subseteq P_0$:
\begin{equation*}
P_1=\{\overline{a_1x}\cdot\overline{a_2xa_1}
\cdots\overline{a_kxa_1\cdots a_{k-1}}\mid~a_ixa_1\cdots a_{i-1}\in
S\setminus I~\mbox{for all}~i\leq k\}.
\end{equation*}
Take a typical product
$\overline{a_1x}\cdot\overline{a_2xa_1}\cdots\overline{a_kxa_1\cdots
a_{k-1}}\in P_1$. Then as we discussed above, $x,a_1,\ldots,a_k\in
D$ for some $\mathcal{D}$-class $D$ from $M$. Moreover,
$a_1x,\ldots,a_kx\in D$. Hence by Lemma~\ref{lm:D-class} and
Miller-Clifford Theorem, we have that each $\mathcal{H}$-class
$L_{a_1}\cap R_{x},\ldots,L_{a_k}\cap R_x$ contains an idempotent.
Then, by hypothesis, $a_1\mathcal{L}\cdots\mathcal{L}a_k$.

Now, for all $i\leq k-1$, we have $a_ia_{i+1}\in D$ and so
$L_{a_i}\cap R_{a_{i+1}}$ contains an idempotent. Having that
$a_i\mathcal{L}a_{i+1}$ and by $\mathcal{H}$-triviality, we deduce
that $a_{i+1}$ is an idempotent. Then $a_1=a_1a_2=\cdots=a_1\cdots
a_{k-1}$. Hence
\begin{equation*}
q_x=\overline{a_1x}\cdot\overline{a_2xa_1}\cdots\overline{a_kxa_1\cdots
a_{k-1}}=\overline{a_1x}\cdot\overline{a_2xa_1}\cdots\overline{a_kxa_1}.
\end{equation*}
Finally, $a_1x\in D$ and so $xa_1$, being the unique element of
$L_{a_1}\cap R_{x}$, is an idempotent. Then for all $2\leq i\leq k$,
$a_i\mathcal{L}a_1\mathcal{L}xa_1$, and since $a_i$ is an
idempotent, we obtain that $a_ixa_1=a_i$. Therefore
$q_x=\overline{a_1x}\cdot\overline{a_2}\cdots\overline{a_k}$. Recall
also that $a_2,\ldots,a_k$ are $\mathcal{L}$-equivalent idempotents.

So that, to establish Step 3, it suffices to prove finiteness of the
set
\begin{equation*}
P_2=\{\overline{a_1}\cdots\overline{a_k}\mid~a_1,\ldots,a_k~\mbox{are}~\mathcal{L}-\mbox
{equivalent idempotents from}~S\setminus I\}.
\end{equation*}
Take a typical product $\overline{a_1}\cdots\overline{a_k}\in P_2$.
Let $x\in S$ be arbitrary and let
$q_x=q(\overline{a_1}\cdots\overline{a_k},x)$. Then
$q_x=\overline{a_1x}\cdot\overline{a_2xa_1}\cdots\overline{a_kxa_1}$.
Consider now the following three cases:

\vspace{\baselineskip}

{\bf\noindent Case 1: $(a_1x,a_1)\notin\mathcal{D}$.}

Then, as above, there are only at most
$k_1=|\overline{I}\langle\overline{S}\rangle^1|$ many such $q_x$-s.

\vspace{\baselineskip}

{\bf\noindent Case 2: $a_1x\mathcal{D}a_1$ but
$(a_1x,a_1)\notin\mathcal{L}$.}

Let $y\in S$ be arbitrary. Then
\begin{equation*}
q_{x,y}=q(q_x,y)=
\overline{a_1xy}\cdot\overline{a_2xa_1ya_1x}\cdot\overline{a_3xa_1ya_1xa_2xa_1}\cdots.
\end{equation*}

We will prove that if $k>2$, then $a_3xa_1ya_1xa_2xa_1\in I$.
Suppose the converse. Then $a_3xa_1ya_1xa_2xa_1\mathcal{D}a_1$ and
so $a_1x,a_2,a_1xa_2$ all lie in the same $\mathcal{D}$-class
$D_{a_1}$. Then by Lemma~\ref{lm:D-class}, we obtain that
$L_{a_1x}\cap R_{a_2}=\{e\}$ contains an idempotent. But
$a_1\mathcal{L}a_2$ and $(a_1x,a_1)\notin\mathcal{L}$, so that $a_2$
and $e$ are $\mathcal{R}$-equivalent distinct idempotents, a
contradiction.

Therefore $q_{x,y}\in\overline{S}\cup\overline{S}^2\cup
(\overline{S}^2\cdot\overline{I}\langle\overline{S}\rangle^1)$. In
turn, it implies that there at most $k_2$ elements $q_x$, where
$k_2$ depends only on $S$.

\vspace{\baselineskip}

{\bf\noindent Case 3: $a_1x\mathcal{L}a_1$.}

Then in particular we have $x\mathcal{L}a_1$ and $x$ is an
idempotent. Hence $a_1x=a_1$, $a_2xa_1=a_2,\ldots,a_kxa_1=a_k$.
Therefore $q_x=\overline{a_1}\cdots\overline{a_k}$.

\vspace{\baselineskip}

So, from Case 3 it follows that $\overline{a_1}\cdots\overline{a_k}$
is uniquely determined by $\tau(\overline{a_1}\cdots\overline{a_k})$
and $q_x$-s with $x\in S$ such that $(a_1x,a_1)\notin\mathcal{L}$.
Thus from Cases 1 and 2 we have
\begin{equation*}
|P_2|\leq |S|\cdot (k_1+k_2)^{|S|}.
\end{equation*}
So, $P_2$ is finite and thus Step 3 is established.

\vspace{\baselineskip}

{\bf\noindent Step 4: $\mathbf{\langle\overline{S}\rangle}$ is
finite.}

Follows from
$\langle\overline{S}\rangle=\overline{I}\langle\overline{S}\rangle^1
\cup\langle\overline{S}\setminus\overline{I}\rangle\cup\langle\overline{S}\setminus\overline{I}\rangle
\overline{I}\langle\overline{S}\rangle^1$, by Steps 2 and 3.
\end{proof}


\end{document}